\newcommand{\R}{\mathbb{R}}
\newcommand{\N}{\mathbb{N}}
\newcommand{\T}{\mathbb{T}_m}
\renewcommand{\S}{\mathcal{S}}
\newcommand{\Lloc}{L^{\infty}_{loc}([0, +\infty); L^\infty (\T, \R) )}
\DeclareMathOperator{\med}{\,median\,}
\newtheorem{teo}{Theorem}[section]
\newtheorem{co}[teo]{Corollary}
\newtheorem{pro}[teo]{Proposition}
\theoremstyle{remark}
\newtheorem{re}[teo]{Remark}
\theoremstyle{example}
\newtheorem{ex}[teo]{Example}
\theoremstyle{definition}
\title[Evolution equations on trees]{Existence, uniqueness and decay rates for evolution equations on trees}
\author[L. M. Del Pezzo, C. A. Mosquera and J. D. Rossi]
{Leandro M. Del Pezzo, Carolina A. Mosquera and Julio D. Rossi}
\address{Leandro M. Del Pezzo and Carolina A. Mosquera
\hfill\break\indent
CONICET and Departamento  de Matem{\'a}tica, FCEyN,
Universidad de Buenos Aires,
\hfill\break\indent Pabellon I, Ciudad Universitaria (1428),
Buenos Aires, Argentina.}
\email{{\tt ldpezzo@dm.uba.ar, mosquera@dm.uba.ar}}
\address{Julio D. Rossi \hfill\break\indent
Departamento  de An{\'a}lisis Matem{\'a}tico, Universidad de Alicante,
\hfill\break\indent Ap. correo 99, 03080, Alicante, SPAIN.
}
\email{{\tt julio.rossi@ua.es}}
\date{\today}
\subjclass[2010]{35B40, 35K55, 91A22. }
\keywords{Evolution equations, averaging operators, decay estimates}
\thanks{
Leandro M. Del Pezzo was partially supported by UBACyT 20020110300067
and CONICET PIP 5478/1438  (Argentina) ,
Carolina A. Mosquera was partially supported by UBACyT 20020100100638, PICT 0436 and
CONICET PIP  112 200201 00398 (Argentina)
and Julio D. Rossi  was partially supported by MTM2011-27998,
(Spain) }
\begin{document}
\begin{abstract}
We study evolution equations governed by an averaging operator on a directed tree, showing existence and uniqueness of solutions. In addition we find conditions of the initial condition that allows us to find the asymptotic decay rate of the solutions as $t\to \infty$. It turns out that this decay rate is not uniform, it strongly depends on how the initial condition goes to zero as one goes down in the tree.
\end{abstract}

\maketitle

\section{Introduction}
Let $\T$ be a directed tree with $m$-branching, we denote by $x$ the vertices of the tree. Given
a function $f:\T\to\mathbb{R}$,
in this work we study the following Cauchy problem
\begin{equation}
	\label{CP}
	\begin{cases}
		u_t(x,t)-\Delta_F u (x,t) = 0 & \mbox{ in }
		\T\times(0,+\infty),\\
		u(x,0)=f(x) & \mbox{ in } \T,
	\end{cases}
\end{equation}
where
\[
\Delta_F u (x,t)= F(u((x,0),t),\dots,u((x,m-1),t))-u(x,t),
\]
 being $F$ an averaging operator, see the precise definition in Section \ref{previa}. The simplest linear example of an averaging operator is
the usual average
\[
F(x_1,\dots,x_m)=\dfrac{1}m\displaystyle\sum_{j=1}^m x_j
\]
but we can include nonlinear functions as
\[
F(x_1,\dots,x_m)=\dfrac{\alpha}2
			\left(\displaystyle\max_{1\le j\le m}
			\{x_j\}+\displaystyle\min_{1\le j\le m}\{x_j\}
			\right) + \dfrac{1-\alpha}m\displaystyle\sum_{j=1}^m x_j,
\]
 with $0<\alpha<1$.

We can see that $u$ is a solution of \eqref{CP} if and
only if it is a solution of the integral equation
\begin{equation}\label{EI}
u(x,t)=K_fu(x,t),
\end{equation}
where
\[
K_fu(x,t):=\int_{0}^t
e^{z-t}F(u((x,0),z),\dots,u((x,m-1),z))\, dz + e^{-t} f(x).
\]

We first prove existence and uniqueness of a locally bounded global in time solution using a fixed point argument for $K_f$.

\begin{teo}\label{TEU}
Let $f\in L^\infty(\T,\R).$ Then
there exists a unique solution $u$ in
\[
\Lloc:=  \left\{
v\in L^\infty(\T\times[0,T],\R) \ \forall T>0\right\}
\]
of \eqref{CP}.
\end{teo}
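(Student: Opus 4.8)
The plan is to realize the solution of \eqref{CP} as the unique fixed point of the integral operator $K_f$, obtaining a global solution by gluing together fixed points on successively larger time slabs. First I would fix $T>0$ and work in the Banach space $X_T := L^\infty(\T\times[0,T],\R)$ with the sup-norm $\|v\|_T := \|v\|_{L^\infty(\T\times[0,T])}$; since $\T$ is a countable vertex set this is simply the supremum over vertices of the essential supremum in $t$. The goal is to apply the Banach fixed point theorem to $K_f$ on $X_T$.

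The first step is to check that $K_f$ maps $X_T$ into itself. Here I would invoke two structural properties of an averaging operator $F$ from Section \ref{previa}: that $F$ is sandwiched between the minimum and maximum of its arguments, $\min_j x_j \le F(x_1,\dots,x_m)\le \max_j x_j$, and that $F$ is nonexpansive for the sup-norm, $|F(x)-F(y)|\le \max_{0\le j\le m-1}|x_j-y_j|$. The latter is really the only ingredient that is not automatic: it follows from monotonicity together with invariance under adding a constant, since with $c=\max_j|x_j-y_j|$ one has $y_j\le x_j+c$, hence $F(y)\le F(x)+c$, and symmetrically. The sandwich bound, the integrability of the kernel $e^{z-t}$, and $f\in L^\infty(\T,\R)$ then give $|K_f v(x,t)|\le (1-e^{-t})\|v\|_T + e^{-t}\|f\|_{L^\infty}\le \max\{\|v\|_T,\|f\|_{L^\infty}\}$, so $K_f v\in X_T$.

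The heart of the argument is the contraction estimate. For $u,v\in X_T$, subtracting and using nonexpansiveness of $F$ inside the integral yields
\[
|K_f u(x,t)-K_f v(x,t)| \le \int_0^t e^{z-t}\max_{0\le j\le m-1}|u((x,j),z)-v((x,j),z)|\,dz \le \|u-v\|_T\int_0^t e^{z-t}\,dz.
\]
Since $\int_0^t e^{z-t}\,dz = 1-e^{-t}\le 1-e^{-T}<1$ for every finite $T$, the map $K_f$ is a contraction on $X_T$ with constant $1-e^{-T}$. Crucially this holds for all $T$, not merely for small $T$: the exponential weight in the kernel supplies the contraction automatically, so no smallness restriction is needed. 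By the Banach fixed point theorem $K_f$ has a unique fixed point $u_T\in X_T$.

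It remains to pass from these time-truncated solutions to a single global solution in $\Lloc$. For $0<T_1<T_2$, the restriction of $u_{T_2}$ to $\T\times[0,T_1]$ is again a fixed point of $K_f$ on $X_{T_1}$, because $K_f v(x,t)$ depends only on the values $v(\cdot,z)$ for $z\le t$; uniqueness on $X_{T_1}$ then forces $u_{T_2}|_{[0,T_1]}=u_{T_1}$. This consistency lets me define $u(x,t):=u_T(x,t)$ for any $T>t$ unambiguously, and the resulting $u$ lies in $\Lloc$ and satisfies \eqref{EI}, hence solves \eqref{CP}; uniqueness in $\Lloc$ follows since any solution restricts to a fixed point on each $X_T$. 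The only points requiring care are the derivation of the structural estimates for $F$ and the measurability of $z\mapsto F(v((x,0),z),\dots,v((x,m-1),z))$, the latter being immediate from continuity of $F$ and measurability of $v$ in $t$. I expect everything after the nonexpansiveness of $F$ is in hand to be entirely routine.
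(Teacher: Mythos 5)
Your proposal is correct and follows essentially the same route as the paper: a Banach fixed-point argument for $K_f$ on each slab $\T\times[0,T]$, with the contraction constant $1-e^{-T}$ coming from $\int_0^t e^{z-t}\,dz$ and the nonexpansiveness of $F$, followed by uniqueness and gluing via the arbitrariness of $T>0$. The only cosmetic differences are that the paper runs the contraction in the subspace of functions continuous in $t$ (which sidesteps the measurability point you address), and that your consistency argument between slabs is spelled out more explicitly than the paper's.
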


In addition, a comparison principle holds.

\begin{teo}\label{TCOMP}
Let $F$ be an averaging operator, $f, g\in L^\infty(\T,\R)$ such that $f\le g$ in $\T,$
and $u,v\in\Lloc$ such that
\begin{equation}\label{des}
u(x,t)\le K_fu(x,t) \quad\mbox{ and }\quad v(x,t)\ge K_gv(x,t)
\end{equation}
for all $(x,t)\in\T\times[0,+\infty)$. Then  $u\le v$ in $\T\times[0,+\infty).$
\end{teo}

Once we have established existence and uniqueness of global in time solutions a natural question is to look for its
asymptotic behaviour as $t\to \infty$. We find conditions on the initial condition $f$ (that involve the speed at which they go to zero as one goes down in the tree) that guarantee that solutions go to zero as $t\to \infty$. Under these conditions we can find bounds for the decay rate. Surprisingly the decay rate for solutions to \eqref{CP} is not uniform. It strongly depends on the decay of the initial condition $f$. For example, for initial conditions with finite support (only a finite number of vertices have non-zero values) we find a decay of the form $t^{\mu} e^{-t}$ (here $\mu$ depends on the size of the support of $f$), while for data without finite support we find a decay of the form $e^{-\lambda t}$ (with $0<\lambda<1$ depending on the decay of $f$).
This is the content of our next results whose proof rely mostly on comparison arguments. For the statements we need to introduce the following notations.

Let $f\in L^{\infty}(\T, \R).$ We will say that $f$
has finite support if there exists $n\in\N_0$ such that
$f(x)=0$ for all $x\in\T$ with $l(x)\ge n,$ where $l(x)$ denotes
the level of $x.$ We also define
\[
\textit{a}(f):= \min\{j\in\N_0\colon f(x)=0,\, \forall x\in\T \text{ with }\textit{l}(x) \ge j\},
\]
and
\[
	\mu(f):=
	\begin{cases}
		\textit{a}(f)-1 & \mbox{ if } \textit{a}(f)>0,\\
		0 & \mbox{ if } \textit{a}(f)=0.
	\end{cases}
\]

\begin{teo}\label{TEST}
Let $F$ be an averaging operator and $f\in L^\infty(\T,\R)$ with
finite support. If $u\in \Lloc$ is the solution of \eqref{CP} with initial condition $f,$ then
\begin{equation}\label{estimacion.2}
	\max_{x\in\T} |u(x,t)| \le
	\frac{t^{\mu(f)}e^{-t}}{\mu(f)!}\|f\|_{L^\infty(\T,\R)},
\end{equation}
for $t$ large enough.
\end{teo}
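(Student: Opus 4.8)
The plan is to squeeze $u$ between $\pm v$ for an auxiliary function $v=v(x,t)$ that depends on $x$ only through its level $l(x)$, and then to invoke the comparison principle of Theorem~\ref{TCOMP}. Throughout write $M:=\|f\|_{L^\infty(\T,\R)}$ and $a:=a(f)$. If $a=0$ then $f\equiv0$, so $u\equiv0$ by Theorem~\ref{TEU} and \eqref{estimacion.2} is trivial; hence I assume $a\ge1$, so that $\mu:=\mu(f)=a-1$. A preliminary remark that organizes everything is that the evolution at a vertex $x$ couples $u(x,\cdot)$ only to the values at the $m$ children of $x$; thus each subtree is invariant, and on the subtree rooted at any vertex of level $a$ the datum vanishes, so uniqueness forces $u\equiv0$ there. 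In particular $u(x,t)=0$ whenever $l(x)\ge a$, which is what lets the construction below start from level $a$.

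I would then define the comparison profile level by level. Set $\psi_a\equiv0$ and, for $k=a-1,a-2,\dots,0$,
\[
\psi_k(t):=\int_0^t e^{z-t}\psi_{k+1}(z)\,dz+e^{-t}M,
\]
and put $v(x,t):=\psi_{l(x)}(t)$, with $v\equiv0$ for $l(x)\ge a$. The crucial point is that $v$ is constant across the siblings below any vertex, so when $K$ is applied to $v$ the averaging operator acts on a constant vector; using only the normalization $F(c,\dots,c)=c$ one gets $K_g v(x,t)=\int_0^t e^{z-t}\psi_{l(x)+1}(z)\,dz+e^{-t}g(x)$. Choosing $g:=M$ on the levels $l(x)<a$ and $g:=0$ elsewhere gives $f\le g$ and, by the very definition of the $\psi_k$, the identity $v=K_g v$; in particular $v\ge K_g v$. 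Since the solution satisfies $u=K_f u$, Theorem~\ref{TCOMP} yields $u\le v$. Running the same argument with $-u$, $-v$ and the datum $-g\le f$ (again $-v$ is constant across siblings, so $F(-c,\dots,-c)=-c$ makes $-v=K_{-g}(-v)$) gives $-v\le u$. Hence $|u(x,t)|\le\psi_{l(x)}(t)$ for every $(x,t)$.

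What remains is to solve the recursion and read off the rate. The substitution $g_k(t):=e^{t}\psi_k(t)$ converts the integral relations into the triangular system $g_k'=g_{k+1}$ with $g_a\equiv0$ and $g_k(0)=M$, whose solution one checks to be $g_{a-1-j}(t)=M\sum_{i=0}^{j}t^i/i!$. Therefore $\psi_{a-1-j}(t)=e^{-t}M\sum_{i=0}^{j}t^i/i!$, an expression that increases as the level decreases, so the bound is largest at level $0$ and
\[
\max_{x\in\T}|u(x,t)|\le\psi_0(t)=e^{-t}M\sum_{i=0}^{\mu}\frac{t^i}{i!}.
\]
This is the sharp output of the comparison, and the polynomial order $\mu=\mu(f)$ emerges precisely as the number of levels the bottom of the support must be transported upward to the root.

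The step I expect to require the most care is not the comparison itself — which, thanks to $v$ being constant on siblings, reduces to an equality by construction — but the passage from this partial-sum bound to the displayed estimate \eqref{estimacion.2}, which retains only the leading monomial. Since $\sum_{i=0}^{\mu}t^i/i!=(t^\mu/\mu!)\,(1+O(1/t))$ as $t\to\infty$, the factor $t^{\mu}e^{-t}/\mu!$ records the exact decay rate while the remaining terms are of lower order; making precise the threshold beyond which the leading term governs is exactly the content of the hypothesis ``$t$ large enough,'' and this asymptotic reduction is the only place where largeness of $t$ is used. A secondary point worth isolating is the nonnegativity and level-monotonicity of the $\psi_k$, immediate from the explicit formula, which is what justifies that the maximum of the bound is attained at level $0$.
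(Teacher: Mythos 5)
Your argument is correct up to its final step, and it takes a genuinely different route from the paper's. Write $M=\|f\|_{L^\infty(\T,\R)}$, $a=a(f)$ and $\mu=\mu(f)$. The paper integrates the equation level by level: on $\T^{\mu}$ it finds $u(x,t)=f(x)e^{-t}$, then moves upward, writing $\frac{d}{dt}\left(e^tu(x,t)\right)$ in terms of the children's already-computed profiles and controlling the resulting coefficients $\mathcal{A}^k_x$ by $M$ via monotonicity of $F$. You instead build a level-dependent barrier $v(x,t)=\psi_{l(x)}(t)$ and exploit that $F$ is the identity on the diagonal ($F(c,\dots,c)=cF(1,\dots,1)=c$ by (i)--(ii)), so that $v=K_gv$ exactly for the majorant datum $g$ ($g=M$ on levels $<a$, $g=0$ below); two applications of Theorem \ref{TCOMP} (using $F(-c,\dots,-c)=-c$, i.e.\ property (ii) with $t=-1$, for the lower barrier) give $-v\le u\le v$. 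This replaces the paper's induction and bookkeeping by a single comparison, and the closed-form solution of your recursion yields the clean bound $\max_{x\in\T}|u(x,t)|\le\psi_0(t)=Me^{-t}\sum_{i=0}^{\mu}t^i/i!$. Moreover this bound is exactly sharp: for the datum $f\equiv M$ on levels $0,\dots,\mu$ and $0$ below, the solution satisfies $u(\emptyset,t)=\psi_0(t)$.

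The genuine gap is the passage from this partial-sum bound to \eqref{estimacion.2}. Since every omitted term is positive, $\sum_{i=0}^{\mu}t^i/i!>t^{\mu}/\mu!$ for all $t>0$ when $\mu\ge1$, so there is no threshold ``$t$ large enough'' beyond which $Me^{-t}\sum_{i=0}^{\mu}t^i/i!\le Me^{-t}t^{\mu}/\mu!$ holds; the factor $1+O(1/t)$ you invoke is strictly greater than $1$, and you have no strict slack in the leading coefficient with which to absorb it. Nor can this be repaired by a smarter argument: the equality case just described shows that \eqref{estimacion.2}, read literally, fails for every $t>0$ when $\mu\ge1$ and $f\equiv M$ on its support; what is actually provable is $\max_{x\in\T}|u(x,t)|\le(1+\ep)\frac{t^{\mu}e^{-t}}{\mu!}M$ for every $\ep>0$ and $t\ge t_\ep$, which your estimate gives immediately. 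For what it is worth, the paper's own proof makes the identical leap: it ends with $|u(\emptyset,t)|\le e^{-t}p_2(t)$, where $p_2$ has leading coefficient $\mathcal{A}^{\mu}_{\emptyset}$ known only to satisfy $|\mathcal{A}^{\mu}_{\emptyset}|\le M$, plus positive lower-order terms of size $M$, and then simply asserts \eqref{estimacion.2}. So your proof is on par with --- in fact its penultimate estimate is the sharp form of --- the paper's result; the defect lies in the final reduction, shared by both arguments, and traces back to the statement of Theorem \ref{TEST} itself, whose optimality example in Remark \ref{cota.opt} has datum supported on a single level, precisely the case in which all lower-order terms vanish.
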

The above bound is optimal, see Remark \ref{cota.opt}.

\medskip

For $f$ that are not finitely supported we have the following result.

\begin{teo}\label{RiBer.es.la.B}
Let $F$ be an averaging operator and $f\in L^{\infty}(\T, \R)$ such that there exist $\lambda\in(0,1)$ and $k\in\R_{>0}$ such that
\[
	|f(x)|\le k(1-\lambda)^{l(x)} \quad \forall x\in\T.
\]
If $u\in \Lloc$ is the solution of \eqref{CP} with initial condition $f,$
then
\[
	\max_{x\in\T}|u(x,t)|\le ke^{-\lambda t} \quad \forall t\in\R.
\]
\end{teo}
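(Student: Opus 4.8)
The plan is to prove Theorem \ref{RiBer.es.la.B} by a comparison argument: I would construct an explicit supersolution of the integral equation \eqref{EI} that dominates $u$, and whose maximum over the tree decays like $ke^{-\lambda t}$. Since the hypothesis bounds $|f(x)|$ by $k(1-\lambda)^{l(x)}$ and the averaging operator $F$ is order-preserving and commutes with additive constants (in particular $F(c,\dots,c)=c$), the natural guess is a supersolution that is itself a product of a time factor and a level-dependent spatial factor matching the decay of the data.

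\begin{proof}[Proof plan]
First I would reduce to the case $u\ge 0$ (equivalently, bound $|u|$) by applying the comparison principle of Theorem \ref{TCOMP} twice, comparing $u$ against $\pm$ a positive supersolution; by symmetry it suffices to dominate $u$ from above by a nonnegative function and from below by its negative. Next, I would propose the ansatz
\[
	v(x,t):= k\,e^{-\lambda t}(1-\lambda)^{l(x)},
\]
and check that $v$ is a supersolution in the sense $v\ge K_g v$ of \eqref{des} for a suitable $g\ge f$, namely $g(x)=k(1-\lambda)^{l(x)}$. The key computation is to verify $v(x,t)\ge K_g v(x,t)$. Plugging $v$ into $K_g v$, one notes that the children $(x,j)$ of $x$ all sit at level $l(x)+1$, so $v((x,j),z)=k\,e^{-\lambda z}(1-\lambda)^{l(x)+1}$ is independent of $j$; hence $F$ of these equal arguments equals that common value, $k\,e^{-\lambda z}(1-\lambda)^{l(x)+1}$. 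The integral then becomes an elementary one-variable computation, and the required inequality should reduce to a clean algebraic condition on $\lambda\in(0,1)$.

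Carrying this out, one gets
\[
	K_g v(x,t)=k(1-\lambda)^{l(x)+1}\int_0^t e^{z-t}e^{-\lambda z}\,dz + e^{-t}k(1-\lambda)^{l(x)},
\]
and after evaluating the integral $\int_0^t e^{(1-\lambda)z}\,dz=\tfrac{e^{(1-\lambda)t}-1}{1-\lambda}$, the factor $(1-\lambda)^{l(x)+1}\cdot\tfrac{1}{1-\lambda}$ simplifies the $(1-\lambda)$ powers, and the terms should combine so that $K_g v(x,t)\le v(x,t)=k e^{-\lambda t}(1-\lambda)^{l(x)}$ for all $t$. The sign of the residual terms is where I expect the inequality to turn precisely on $0<\lambda<1$, so that $v$ is a genuine supersolution. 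With that established, and since $f\le g$ while $u$ satisfies $u\le K_f u$ (indeed $u=K_f u$ as the solution), Theorem \ref{TCOMP} yields $u\le v$; the symmetric argument gives $-v\le u$, whence $|u(x,t)|\le k e^{-\lambda t}(1-\lambda)^{l(x)}\le k e^{-\lambda t}$, taking the maximum over $x\in\T$.

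The main obstacle I anticipate is the verification that the proposed $v$ is truly a supersolution of the integral inequality, i.e. making the elementary integral estimate come out with the correct sign uniformly in the level $l(x)$ and for all $t$; this hinges on the interplay between the exponential time decay $e^{-\lambda t}$ and the geometric spatial decay $(1-\lambda)^{l(x)}$ being tuned to the same $\lambda$, which is presumably the reason the theorem pairs exactly these two rates. A secondary technical point is ensuring that $F$ applied to the (equal) children values reproduces that value, which follows from the averaging property $F(c,\dots,c)=c$ together with monotonicity; this is what lets the spatial dependence factor cleanly through the operator.
\end{proof}
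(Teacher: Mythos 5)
Your proposal is correct and follows essentially the same route as the paper: the paper proves Proposition \ref{RiBer.es.el.mejor.del.mundo}, showing that $e^{-\lambda t}(1-\lambda)^{l(x)}$ is an \emph{exact} solution with initial datum $(1-\lambda)^{l(x)}$ (via the differential form of the equation rather than the integral operator $K_g$), and then invokes Theorem \ref{TCOMP} twice, exactly as you do. Your anticipated ``residual terms'' in fact vanish identically --- carrying out your integral computation gives $K_g v = v$ on the nose, so your supersolution is the paper's explicit solution.
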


Again this bound is optimal, see Proposition \ref{RiBer.es.el.mejor.del.mundo}.

In the next result we show that we can construct a solution with quite different bahaviours at $\emptyset$, the first node of our tree.

\begin{teo} \label{todo.vale}
Let $F$ be an averaging operator and $a_0(t)\in C^{\infty}([0,\infty), \R)$, then there is a solution of
$u_t(x,t)-\Delta_F u (x,t) = 0$ in $
		\T\times(0,+\infty)$,
such that
\[
u(\emptyset,t) = a_0 (t) \quad \forall t\in\R.
\]
\end{teo}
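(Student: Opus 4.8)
The plan is to look for a solution that depends on the vertex $x$ only through its level $l(x)$. Since $F$ is an averaging operator it satisfies $F(c,\dots,c)=c$ for every $c\in\R$ (as one checks directly for the two explicit examples in the introduction, and as follows from the defining properties recalled in Section~\ref{previa}). Hence if we posit $u(x,t)=a_{l(x)}(t)$ for some sequence of functions $a_n\colon[0,\infty)\to\R$, then all $m$ children of a vertex $x$ lie at level $l(x)+1$ and share the common value $a_{l(x)+1}(t)$, so
\[
\Delta_F u(x,t)=F\bigl(a_{l(x)+1}(t),\dots,a_{l(x)+1}(t)\bigr)-a_{l(x)}(t)=a_{l(x)+1}(t)-a_{l(x)}(t).
\]
Thus the equation $u_t-\Delta_F u=0$ collapses to the single scalar recursion $a_n'(t)=a_{n+1}(t)-a_n(t)$, that is
\[
a_{n+1}=a_n'+a_n \qquad (n\ge 0),
\]
which is an ordinary forward recursion once $a_0$ is prescribed.

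Next I would define the $a_n$ directly from this recursion. Writing $D=\tfrac{d}{dt}$, set $a_n:=(D+I)^n a_0$, i.e. take $a_0$ to be the given datum and let $a_{n+1}:=a_n'+a_n$. Since $a_0\in C^{\infty}([0,\infty),\R)$, an immediate induction shows every $a_n\in C^{\infty}([0,\infty),\R)$ (indeed $a_n=\sum_{k=0}^n\binom{n}{k}a_0^{(k)}$ is a finite linear combination of derivatives of $a_0$ up to order $n$). I then define
\[
u(x,t):=a_{l(x)}(t),\qquad (x,t)\in\T\times[0,\infty),
\]
which is well defined and smooth in $t$ for each fixed $x$.

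Finally I would verify the two required properties. For the equation, fix $(x,t)$: by construction $u_t(x,t)=a_{l(x)}'(t)$, while the displayed computation of $\Delta_F u$ together with the recursion gives $\Delta_F u(x,t)=a_{l(x)+1}(t)-a_{l(x)}(t)=a_{l(x)}'(t)$, so $u_t-\Delta_F u=0$ at every point. For the prescribed value, since $l(\emptyset)=0$ we obtain $u(\emptyset,t)=a_0(t)$ for all $t\ge 0$, as desired. The only point requiring care is the constant-reproducing property $F(c,\dots,c)=c$, which is exactly what makes the level-only ansatz turn the averaging into the identity; granting it, there is no analytic obstacle. Note in particular that $u$ need not belong to $\Lloc$ (for instance $a_0(t)=e^{2t}$ yields $a_n=3^n e^{2t}$, unbounded in $x$), which is precisely why such arbitrary behaviour at $\emptyset$ becomes possible once the global boundedness hypotheses of the earlier theorems are dropped.
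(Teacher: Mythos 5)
Your proposal is correct and follows essentially the same route as the paper: the level-only ansatz $u(x,t)=a_{l(x)}(t)$, the reduction via $F(c,\dots,c)=c$ to the recursion $a_{n+1}=a_n'+a_n$, and the explicit solution $a_n=\sum_{k=0}^{n}\binom{n}{k}a_0^{(k)}$. Your additional remarks (justifying $F(c,\dots,c)=c$ from the averaging axioms, checking smoothness by induction, and noting that $u$ need not lie in $\Lloc$) merely make explicit what the paper leaves implicit.
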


Let us end the introduction with a brief comment on previous bibliography that concerns
mostly the stationary problem.
For nonlinear mean values on a finite graph we refer to \cite{Ober} and references
therein. For equations on trees like the ones considered here, see \cite{ary,KLW,KW}
and \cite{s-tree,s-tree1}, where for the stationary problem it is proved the existence and
uniqueness of a solution using game theory. See also \cite{dpmr,dpmr2} where the
authors study the unique continuation and find some estimates for the harmonic
measure on trees. Here we use ideas from these references.

The time dependent diffusion equations on simple, connected, undirected graphs, have been used to model diffusion processes, such as, modeling energy
flows through a network or vibration of molecules, \cite{ChB,CM}.

In the case when $F$ is the usual average, it is possible to construct a fundamental
solution for \eqref{CP} on  infinite, locally finite,
connected graphs. See \cite{Web,Woj} and the references therein.

This paper is a natural extension of the previously mentioned references since here we
deal with the evolution problem associated to an averaging operator on a tree that is a
directed graph.

\medskip

This paper is organized as follows: in Section \ref{previa} we collect some preliminaries; in Section \ref{sect-exis-uni} we deal with existence and uniqueness of solutions and prove Theorem \ref{TEU} and Theorem \ref{TCOMP}; in Section \ref{sect-decay} we prove our results concerning the decay of solutions as $t\to \infty$ proving Theorem \ref{TEST}, Theorem \ref{RiBer.es.la.B} and Theorem \ref{todo.vale}.

\section{Preliminaries}\label{previa}

We begin with a review of the basic results that will be needed in subsequent sections.
The known results are generally stated without proofs, but we provide references where
the proofs can be found. Also, we introduce some of our notational conventions.

\subsection{Directed Tree}
Let $m\in\mathbb{N}_{>2}$. In this work we consider a directed
tree $\T$ with regular $m-$branching, that is, $\T$ consists of
the empty set $\emptyset$ and all finite  sequences
$(a_1,a_2,\dots,a_k)$ with $k\in\N,$ whose coordinates $a_i$ are
chosen from $\{0,1,\dots,m-1\}.$ The elements in $\T$ are called
vertices. Each vertex $x$ has $m$ successors, obtained by adding
another coordinate. We will
denote by $\S(x)$ the set of successors of the vertex $x.$ A
vertex $x\in\T$ is called an $n-$level vertex ($n\in\mathbb{N}$) if
$x=(a_1,a_2,\dots,a_n),$ and we will denote by $l(x)$ the level
of vertex $x.$ The set of all $n-$level vertices is
denoted by $\T^n.$

\medskip

A branch of $\T$ is an infinite sequence of vertices, each followed by its immediate successor.
The collection of all branches forms the boundary of $\T$,  denoted by $\partial\T$.

\medskip

We now define a metric on $\T\cup \partial\T.$ The distance
between two sequences (finite or infinite) $\pi=(a_1,\dots,
a_k,\dots)$ and $\pi'=(a_1',\dots, a_k',\dots)$ is $m^{-K+1}$ when
$K$ is the first index $k$ such that $a_k\neq a_k';$ but when
$\pi=(a_1,\dots, a_K)$ and $\pi'=(a_1,\dots, a_K,
a_{K+1}',\dots),$ the distance is $m^{-K}.$  Hausdorff measure and
Hausdorff dimension are defined using this metric. We have
that $\T$ and $\partial\T$ have diameter one and $\partial\T$ has
Hausdorff dimension one. Now, we observe that the mapping
$\psi:\partial\T\to[0,1]$ defined as
\[
\psi(\pi):=\sum_{k=1}^{+\infty} \frac{a_k}{m^{k}}
\]
is surjective, where $\pi=(a_1,\dots, a_k,\dots)\in\partial\T$ and
$a_k\in\{0,1,\dots,m-1\}$ for all $k\in\mathbb{N}.$ Whenever
$x=(a_1,\dots,a_k)$ is a vertex, we set
\[
 \psi(x):=\psi(a_1,\dots,a_k,0,\dots,0,\dots).
\]
We can also associate to a vertex $x$ an
interval $I_x$ of length $\frac{1}{m^k}$ as follows
\[
 I_x:=\left[\psi(x),\psi(x)+\frac1{m^k}\right].
\]
Observe that for all $x\in \T$, $I_x \cap \partial\T$ is the
subset of $\partial\T$ consisting of all branches that start at
$x$.
With an abuse of notation, we will write $\pi=(x_1,\dots,x_k,\dots)$
instead of $\pi=(a_1,\dots,a_k,\dots)$ where $x_1=a_1$ and
$x_k=(a_1,\dots,a_k)\in\S(x_{k-1})$ for all $k\in\N_{\ge2}.$

Finally we will denote by $\T^x$ the set of the vertices $y\in\T$
such that $I_y\subset I_x.$

\begin{ex}
{\rm  Let $\kappa\in\mathbb{N}$ be at least 3.
  A $\nicefrac{1}{\kappa}-$Cantor set, that we denote by
  $C_{\nicefrac{1}{\kappa}}$,
is the set of all $x\in[0,1]$ that have a base $\kappa$ expansion
without the digit $1$,
  that is $x=\sum a_j\kappa^{-j}$ with
  $a_j\in\{0,1,\dots,\kappa-1\}$ with $a_j \neq 1$. Thus $C_{\nicefrac{1}{\kappa}}$
  is obtained from $[0,1]$ by removing the second $\kappa-$th part of the line segment $[0,
  1]$, and then removing the second interval of length $\nicefrac1{\kappa}$
  from the remaining intervals, and so on. This set
  can be thought of as a directed tree with regular $m-$branching with $m=\kappa-1$.

  For example, if $\kappa=3$, we identify $[0, 1]$ with $\emptyset,$
  the sequence $(\emptyset, 0)$ with the first interval right $[0,
  \nicefrac13]$,
  the sequence $(\emptyset, 1)$ with the central interval $[\nicefrac13,
  \nicefrac23]$ (that is removed),
  the sequence $(\emptyset, 2)$ with the left interval  $[\nicefrac23, 1],$
  the sequence $(\emptyset, 0 ,0)$ with the interval
  $[0, \nicefrac{1}{9}]$ and so on.}
\end{ex}

\subsection{Averaging Operator}
The following definition is taken from \cite{ary}.
Let $$F\colon\R^m\to\R$$ be a continuous function. We call $F$
an averaging operator if it satisfies the following set of conditions:
\begin{enumerate}[(i)]
	\item $F(0,\dots,0)=0$ and $F(1,\dots,1)=1$;
	\item $F(tx_1,\dots,tx_m)=t F(x_1,\dots,x_m)$ for all $t\in\R;$
	\item $F(t+x_1,\dots,t+x_m)=t+ F(x_1,\dots,x_m)$ for all
			$t\in\R;$
	\item $F(x_1,\dots,x_m)<\max\{x_1,\dots,x_m\}$ if not all
		   $x_j$'s are equal;
    \item $F$ is nondecreasing with respect to each variable.
\end{enumerate}

\begin{re}\label{Fmax}
It holds that, if $(x_1,\dots,x_m),
(y_1,\dots,y_m)\in\R^m,$ then
\[
x_j\le y_j + \max_{1\le j\le m}\left\{ x_j-y_j\right\}
\]
for all $j\in\{1,\dots,m\}$. Let $F$ be an averaging operator. Then, by (iii) and (v),
\[
F(x_1,\dots,x_m)\le F(y_1,\dots,y_m)+
\max_{1\le j\le m}\left\{ x_j-y_j\right\}.
\]
Therefore
\[
F(x_1,\dots,x_m)-F(y_1,\dots,y_m)\le
\max_{1\le j\le m}\left\{ x_j-y_j\right\},
\]
and moreover
\[
|F(x_1,\dots,x_m)-F(y_1,\dots,y_m)|\le
\max_{1\le j\le m}\left\{ |x_j-y_j|\right\}.
\]
\end{re}

Now we give some examples.

\begin{ex}
{\rm This example is taken from \cite{KLW}. For $1<p<+\infty,$
	the operator
	$$F^p(x_1,\dots,x_m)=t$$ from $\R^m$ to $\R$  defined
	implicity by
	\[
	\sum_{j=1}^m (x_j-t)|x_j-t|^{p-2}=0
	\]
	is a permutation invariant averaging operator.}
\end{ex}

\begin{ex}
{\rm
For $0\le\alpha,\beta\le1$ with
		$\alpha+\beta=1$, let us consider
		\begin{align*}
			&F_1(x_1,\dots,x_m)=\alpha
			\underset{{1\le j\le m}}{\med}\{x_j\}+
			 \frac{\beta}m\sum_{j=1}^m x_j,\\
			 &F_2(x_1,\dots,x_m)=\alpha
			\underset{{1\le j\le m}}{\med}\{x_j\}+
			 \frac{\beta}2
			 \left(\max_{1\le j\le m}
			\{x_j\}+\min_{1\le j\le m}\{x_j\}
			\right),
		\end{align*}
		where
		\[
\underset{{1\le j\le m}}{\med}\{x_j\}\, :=
   \begin{cases}
  y_{\frac{m+1}2} & \text{ if }m \text{ is even},  \\
  \displaystyle\frac{y_{\frac{m}2}+ y_{(\frac{m}2 +1)}}2
  & \text{ if }m \text{ is odd},
  \end{cases}
\]
with $\{y_1,\dots, y_m\}$ a nondecreasing rearrangement of
$\{x_1,\dots, x_m\}.$

It holds that $F_1$ and $F_2$ are permutation invariant
averaging operators.}
\end{ex}

\section{Existence and Uniqueness}\label{sect-exis-uni}

First we show that there exists a unique
solution of problem
\eqref{CP}
in the space $\Lloc.$


\begin{proof}[Proof of Theorem \ref{TEU}]
{\it Existence.} Let $T>0$ and
\[
C_T:=\{u\in L^{\infty}(\mathbb{T}_m\times[0,T], \mathbb{R}) \colon
u(x,t) \mbox{ is continuous in }t \}.
\]
Observe that $C_T$ is a Banach space with the $L^\infty$-norm.

We can see that $K_f$ is a contraction on $C_T$. In fact,
using Remark \ref{Fmax}, we have that
\[
\|K_fu_1-K_fu_2\|_{\infty}\le\int_{0}^t  e^{z-t} \, dz \|u_1-u_2\|_{\infty} \le (1-e^{-T})\|u_1-u_2\|_{\infty},
\]
for all $u_1, u_2\in C_T.$ Therefore, by the Brouwer fixed-point theorem, $K_f$ has a unique fixed point
$u\in  C_T.$

Since $T>0$ is arbitrary, we can obtain a globally defined solution of \eqref{EI}, $u$.
\medskip

{\it Uniqueness.} Let $u,v$ be two solutions of \eqref{CP} such that
$$u,v\in \Lloc.$$ Then, $u,v$ are solutions of
\eqref{EI} and therefore they are fixed points of $K_f.$ Thus $u
\equiv v$ in $\T\times [0, T]$ for all $T>0$ due to $K_f$ is a
contraction operator. Therefore $u\equiv v$ in
$\T\times[0,+\infty).$
\end{proof}

\begin{re}
We note that there is no need of a ``boundary condition". This problem can be regarded as the analogous for the tree to the Cauchy problem for a PDE, as $u_t = \Delta u$ in $\R^n \times (0, \infty)$ with $u(x,0) =f(x)$ in $\R^n$.
Here we consider $f \in L^\infty$, but the result can be slightly improved to allow for an unbounded initial condition, see Remark \ref{remark.3.3} below.
\end{re}

Next we show a comparison principle.


\begin{proof}[Proof of Theorem \ref{TCOMP}]
Let $T>0.$
We consider
\[
M_T:= \displaystyle\sup_{\T\times [0, T]}\{u-v\}.
\]
Then, given $\varepsilon>0,$ there exists
$(\tilde{x}, \tilde{t})\in \T\times [0, T]$ such that
\[
M_T-\varepsilon\le u(\tilde{x}, \tilde{t})- v(\tilde{x}, \tilde{t}).
\]
Now, by \eqref{des}, we obtain that
\begin{align*}
&M_T-\varepsilon\le u(\tilde{x}, \tilde{t})- v(\tilde{x}, \tilde{t})\\
 & \le \int_{0}^{\tilde{t}}
e^{z-\tilde{t}} \Big(F(u((\tilde{x},0),z),\dots,u((\tilde{x},m-1),z)) \\
& \qquad \qquad \qquad -F(v((\tilde{x},0),z),\dots,v((\tilde{x},m-1),z))\Big) dz\\
&\qquad  + e^{-\tilde{t}}(f(\tilde{x})- g(\tilde{x})).
\end{align*}

Thus, using that $f\le g$ in $\T$ and Remark \ref{Fmax}, we have that
\[
M_T-\varepsilon\le M_T(1-e^{-T}),
\]
and therefore $e^{-T}M_T\le\varepsilon$ for all $\varepsilon>0.$ Then, using that $e^{-T}>0,$ we obtain that $M_T\le 0$
and this implies that $u(x,t)\le v(x,t)$ for all $(x,t)\in \T\times [0, T].$

Since $T>0$ is arbitrary, we can conclude that $u\le v$ in $\T\times [0, +\infty).$
\end{proof}

\begin{co}
Let $F$ be an averaging operator and $f\in L^\infty(\T,\R).$ Then, any bounded solution $u$ of \eqref{CP} with initial condition $f$  satisfies the inequality
$$
|u(x,t)|\le \|f\|_{L^\infty(\T,\R)}
$$
for all $(x,t)\in\T\times [0,+\infty).$
\end{co}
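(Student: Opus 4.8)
The plan is to deduce the bound directly from the comparison principle (Theorem~\ref{TCOMP}) by comparing $u$ against the two constant functions $\pm\|f\|_{L^\infty(\T,\R)}$. Write $M:=\|f\|_{L^\infty(\T,\R)}$ and let $v\equiv M$ and $w\equiv -M$ denote the functions on $\T\times[0,+\infty)$ that are constant in both $x$ and $t$; both trivially lie in $\Lloc$.

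The key observation is that these constants are exact fixed points of the corresponding operators $K_{\pm M}$. Indeed, since $F$ is an averaging operator, properties (i) and (iii) give $F(M,\dots,M)=M+F(0,\dots,0)=M$, and therefore
\[
K_M v(x,t)=\int_0^t e^{z-t} M\,dz + e^{-t}M = M(1-e^{-t})+e^{-t}M=M=v(x,t),
\]
using $\int_0^t e^{z-t}\,dz = 1-e^{-t}$. In particular $v\ge K_M v$, so $v$ is admissible as the supersolution in \eqref{des} with data $g=M$. The identical computation with $-M$ in place of $M$ shows $K_{-M}w=w$, so in particular $w\le K_{-M}w$, making $w$ admissible as the subsolution with data $-M$.

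For the upper bound I would apply Theorem~\ref{TCOMP} to the pair $(u,v)$ with data $(f,M)$: since $f\le M$ on $\T$ by definition of $M$, since $u=K_f u$ (hence $u\le K_f u$) because $u$ solves \eqref{EI}, and since $v\ge K_M v$ as just checked, the comparison principle yields $u\le v=M$ on $\T\times[0,+\infty)$. For the lower bound I would apply Theorem~\ref{TCOMP} to the pair $(w,u)$ with data $(-M,f)$: here $-M\le f$, the subsolution inequality $w\le K_{-M}w$ holds, and $u\ge K_f u$ since again $u=K_f u$, giving $-M=w\le u$. Combining the two inequalities gives $-M\le u(x,t)\le M$, that is $|u(x,t)|\le \|f\|_{L^\infty(\T,\R)}$ for all $(x,t)\in\T\times[0,+\infty)$.

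Since the comparison principle does all the work, there is no genuine obstacle here; the only point requiring care is the verification that the constant functions are exact fixed points of $K_{\pm M}$, which reduces to the normalization $F(c,\dots,c)=c$ for constants $c$—an immediate consequence of the defining properties of an averaging operator—together with the elementary identity $\int_0^t e^{z-t}\,dz+e^{-t}=1$.
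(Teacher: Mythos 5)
Your proposal is correct and follows essentially the same route as the paper: the paper likewise observes that the constant $M=\|f\|_{L^\infty(\T,\R)}$ (respectively $-M$) solves \eqref{CP} with constant initial data and then invokes Theorem~\ref{TCOMP} twice to sandwich $u$ between $-M$ and $M$. Your explicit verification that the constants are fixed points of $K_{\pm M}$ is just a slightly more detailed write-up of the same step.
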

\begin{proof}
We just observe that $w(x,t) =M=\|f\|_{L^\infty(\T,\R)}$ is the solution of \eqref{CP} with initial condition $M.$ Since $f(x) \leq M$, from Theorem \ref{TCOMP}, we obtain that
\[
u(x,t)\leq M, \mbox{ for all } (x,t)\in\T\times [0,+\infty).
\]

In a similar way, we can prove that $u(x,t)\ge -M$ for all $(x,t)\in\T\times [0,+\infty).$ Therefore,
\[
|u(x,t)|\le \|f\|_{L^\infty(\T,\R)} \quad \forall (x,t)\in\T\times [0,+\infty).
\]
This completes the proof.
\end{proof}

\begin{re}\label{remark.3.3}
We remark that we can have existence of a solution even if the initial condition $f$
is not bounded. In fact, we just observe that
\[
u(x,t)=C e^{(\lambda-1)t} \lambda^{\l(x)},
\]
with $\lambda>0$ is a solution of \eqref{CP} with initial condition $f(x)=C \lambda^{\l(x)}.$

Then, there is a solution of \eqref{CP} for any initial condition
such that
\[
0\leq f(x) \leq C \lambda^{\l(x)}.
\]
To obtain such a solution we generate a
sequence of approximating solutions using truncations of the initial condition. In fact,
let
\[
f_n(x)=\min \{f(x), n \},\quad u_n(x,t)=\min \{u(x,t), n \},
\]
and take
$w_n(x,t)\in\Lloc$  the unique solution of \eqref{CP} with initial condition $f_n$
(Theorem \ref{TEU}).

We can see that, $u_n\to u$ as $n\to+\infty,$
$K_{f_n}u_n\le u_n,$ and, by the comparison principle,
$w_n$ is increasing with $n$ and $w_n\le u_n.$

Finally, taking the limit as $n\to \infty$ in the form of the equation given by
\eqref{EI}, we obtain that $w(x,t):=\lim\limits_{n\to+\infty}w_n(x,t)$ is a solution of
\eqref{CP} with initial condition $w(x,0)=f(x)$.
\end{re}

\section{Decay Estimates}\label{sect-decay}

First, we prove Theorem \ref{TEST}.
\begin{proof}[Proof of Theorem \ref{TEST}]
We begin by observing that if $f\equiv0$ on $\T$
then  $u\equiv0$ on $\T\times[0,+\infty).$ Therefore,
\eqref{estimacion.2} holds trivially in this case.

\medskip

Now, we consider the case $f\not\equiv0.$ Then
$\textit{a}(f)\neq0,$ $f(x)\neq 0$ for some $x\in\T^{\mu(f)}$
and $f(x)=0$ for all $x$ such that $\textit{l}(x)> \mu(f).$
Thus, by Theorem \ref{TEU}, $u(x,t)=0$ for all $x$ such that $
\textit{l}(x)>\mu(f).$ Therefore,
if $x\in\T^{\mu(f)},$ we have that
\begin{align*}
	u_t (x,t)&= F(u((x,0), t), \dots, u((x,m-1), t)) -u(x,t) \\[6pt]
                         &= F(0,\dots, 0)-u(x,t)= -u(x,t).
\end{align*}
Then
\[
\frac{d}{dt}\left(e^t u(x,t)\right)=0.
\]
Since $u(x,0)=f(x)$ for all $x\in\T,$ we get
\[
	u(x,t)= f(x)e^{-t} \quad\forall x\in\T^{\mu(f)}.
\]

Thus, for any $x\in\T^{\mu(f)-1}$ we have that
\begin{align*}
	u_t(x,t)&=F(u((x,0), t), \dots, u((x,m-1), t)) -u(x,t)\\
			&=F(f(x,0)e^{-t}, \dots, f(x, m-1)e^{-t}) -u(x,t)\\
			&=F(f(x,0), \dots, f(x, m-1))e^{-t} -u(x,t).
\end{align*}
Then,
\[
	\frac{d}{dt}\left(e^tu(x,t)\right)= \mathcal{A}_x^{1},
\]
where $\mathcal{A}_x^{1}= F(f(x,0), \dots, f(x, m-1)).$ Therefore,
\begin{equation}\label{nivelmu-1}
u(x,t)= (\mathcal{A}_x^{1} t+f(x))e^{-t} \quad \forall
x\in\T^{\mu(f)-1}.
\end{equation}

Observe that
\begin{equation}\label{cotaA1}
	|\mathcal{A}_x^{1}|\le \|f\|_{L^\infty(\T,\R)}
	\quad \forall x\in\T^{\mu(f)-1},
\end{equation}
due to the fact that $F$ is nondecreasing with
respect to each variable.

Arguing as before, using
\eqref{nivelmu-1}, we obtain
\[
	\frac{d}{dt}\left(e^tu(x,t)\right)
	=  F\left(\mathcal{A}_{(x,0)}^{1}t + f(x,0), \dots,
	\mathcal{A}_{(x,m-1)}^{1}t + f(x,m-1)\right),
\]
for every $x\in\T^{\mu(f)-2}$.
Then, since $F$ is nondecreasing with  respect to each variable,
 we have that
\[
	\mathcal{A}_x^{2}t-\|f\|_{L^\infty(\T,\R)}
	\le \frac{d}{dt}\left(e^tu(x,t)\right)\le
	\mathcal{A}_x^{2}t+\|f\|_{L^\infty(\T,\R)}
	\quad \forall x\in\T^{\mu(f)-2},
\]
where
\[
	\mathcal{A}_x^{2}=
	F\left(\mathcal{A}_{(x,0)}^{1}, \dots,
	\mathcal{A}_{(x,m-1)}^{1}\right).
\]
Therefore
\[
\begin{array}{l}
\displaystyle
	e^{-t}\left(\mathcal{A}_x^{2}\frac{t^2}2-
	\|f\|_{L^\infty(\T,\R)}t+f(x)\right)
	\le u(x,t) \\[12pt]
\displaystyle \qquad \qquad \le e^{-t}
	\left(\mathcal{A}_x^{2}\frac{t^2}2+
	\|f\|_{L^\infty(\T,\R)}t+f(x)\right),
\end{array}
\]
for all  $x\in\T^{\mu(f)-2}.$

By \eqref{cotaA1},
using again that $F$ is nondecreasing with respect to each
variable, we obtain
\[
	|\mathcal{A}_x^{2}|\le \|f\|_{L^\infty(\T,\R)} \quad
	\forall  x\in\T^{\mu(f)-2}.
\]

Continuing in the same manner, we can prove
\[
	e^{-t}p_1(t)\le u(\emptyset,t)\le e^{-t}p_2(t),
\]
where
\begin{align*}
	p_1(t)=&\mathcal{A}_{\emptyset}^{\mu(f)}
		\frac{t^{\mu(f)}}{\mu(f)!}- \sum_{j=1}^{\mu(f)-1}
		\frac{t^{j}}{j!}
		\|f\|_{L^\infty(\T,\R)}
		+f(\emptyset),\\[5pt]
	p_2(t)=&\mathcal{A}_{\emptyset}^{\mu(f)}
		\frac{t^{\mu(f)}}{\mu(f)!}+
		\sum_{j=1}^{\mu(f)-1}
		\frac{t^{j}}{j!}
		\|f\|_{L^\infty(\T,\R)}
		+f(\emptyset),\\[5pt]
	\mathcal{A}_{\emptyset}^{\mu(f)}=&
	F\left(\mathcal{A}_{(\emptyset,0)}^{\mu(f)-1},
	\dots, \mathcal{A}_{(\emptyset,m-1)}^{\mu(f)-1}\right).
\end{align*}
Arguing as before, we have that
\[
	|\mathcal{A}_{\emptyset}^{\mu(f)}|\le
	\|f\|_{L^\infty(\T,\R)}.
\]
Thus,
\[
	\max_{x\in\T} |u(x,t)| \le
	\frac{t^{\mu(f)}e^{-t}}{\mu(f)!}\|f\|_{L^\infty(\T,\R)},
\]
for $t$ large enough.
\end{proof}

\begin{re}\label{cota.opt}
The bound that we obtained in Theorem \ref{TEST} is optimal.
In fact, let $n\in\N,$ $F$ be an averaging operator and $f_n\in L^{\infty}(\T, \R)$ defined as
\[
	f_n(x):=
	\begin{cases}
		n! & \mbox{ if } l(x)=n,\\
		0 & \mbox{ if } l(x)\neq n.
	\end{cases}
\]
Note that $\|f_n\|_{L^\infty(\T,\R)}=n!$ and $\mu(f_n)=n.$
Let
\[
	z_n(x,t):=
	\begin{cases}
		\displaystyle \dfrac{n!}{(n-l(x))!}t^{(n-l(x))} & \mbox{ if } 0\le l(x)\le n,\\
		0 & \mbox{ if } l(x)>n.
	\end{cases}
\]
Then, we can observe that $u_n(x,t):= e^{-t}z_n(x,t)\in\Lloc,$ $u_n$ is the solution of \eqref{CP} with initial condition $f_n,$ and
\[
	\max_{x\in\T}|u_n(x,t)|= t^ne^{-t}= \frac{t^{\mu(f_n)}e^{-t}}{\mu(f_n)!}\|f_n\|_{L^\infty(\T,\R)}.
\]
\end{re}

\begin{pro} \label{RiBer.es.el.mejor.del.mundo}
Let $F$ be an averaging operator and $f(x)= (1-\lambda)^{l(x)}$ for some $\lambda\in (0,1).$ Then $u(x,t)= e^{-\lambda t} f(x)$ is the solution to \eqref{CP}.
\end{pro}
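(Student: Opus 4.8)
The plan is to verify by direct substitution that the proposed function $u(x,t)=e^{-\lambda t}f(x)=e^{-\lambda t}(1-\lambda)^{l(x)}$ solves the Cauchy problem \eqref{CP}, and then to invoke the uniqueness part of Theorem \ref{TEU} to conclude that it is \emph{the} solution. Two preliminary observations make this legitimate. First, $f$ is bounded, since $0<1-\lambda<1$ gives $|f(x)|=(1-\lambda)^{l(x)}\le 1$, so $f\in L^\infty(\T,\R)$ and Theorem \ref{TEU} applies. Second, the candidate $u$ satisfies $|u(x,t)|\le 1$ for all $(x,t)\in\T\times[0,+\infty)$, hence $u\in\Lloc$, so it is an admissible competitor in the uniqueness class.

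The key computation rests on the fact that $f$, and therefore $u(\cdot,t)$ at each fixed time, depends on the vertex only through its level. Consequently the $m$ successors $(x,0),\dots,(x,m-1)$ of any vertex $x$ all lie at level $l(x)+1$ and carry the identical value $u((x,j),t)=e^{-\lambda t}(1-\lambda)^{l(x)+1}$. I would then use that an averaging operator maps a constant tuple to that constant: taking $x_1=\dots=x_m=1$ and $t=c$ in property (ii) and using (i) gives $F(c,\dots,c)=c\,F(1,\dots,1)=c$. This is the only place where the axioms (i)--(v) enter, and it is worth spelling out rather than assuming. Applying it with $c=e^{-\lambda t}(1-\lambda)^{l(x)+1}$ yields
\[
F(u((x,0),t),\dots,u((x,m-1),t))=e^{-\lambda t}(1-\lambda)^{l(x)+1},
\]
so that
\[
\Delta_F u(x,t)=e^{-\lambda t}(1-\lambda)^{l(x)}\big[(1-\lambda)-1\big]=-\lambda\,e^{-\lambda t}(1-\lambda)^{l(x)}.
\]

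On the other hand, differentiating in time gives $u_t(x,t)=-\lambda\,e^{-\lambda t}(1-\lambda)^{l(x)}$, which coincides exactly with $\Delta_F u(x,t)$; hence $u_t-\Delta_F u\equiv 0$ on $\T\times(0,+\infty)$. Since also $u(x,0)=(1-\lambda)^{l(x)}=f(x)$, the function $u$ solves \eqref{CP}, and by the uniqueness statement of Theorem \ref{TEU} it is the unique solution in $\Lloc$. I expect no genuine obstacle here: the entire argument is a direct verification, the only subtle point being the reduction of $F$ on a constant tuple to that constant via the homogeneity and normalization axioms. This explicit solution is precisely the one whose level-wise exponential decay saturates the bound $\max_{x\in\T}|u(x,t)|\le k e^{-\lambda t}$ of Theorem \ref{RiBer.es.la.B}, which is why it exhibits the optimality asserted there.
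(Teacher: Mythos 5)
Your proposal is correct and follows essentially the same route as the paper: a direct substitution verifying $u_t = \Delta_F u$, where the only use of the averaging axioms is that $F$ maps a constant tuple to that constant (via homogeneity (ii) and normalization (i)). Your explicit invocation of the uniqueness part of Theorem \ref{TEU} to justify the article ``the'' is a small completeness improvement over the paper's write-up, which leaves that step implicit, but it does not change the nature of the argument.
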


\begin{proof}
We have that $u(x,0)= f(x)$ for all $x\in\T$ and
\begin{align*}
	\Delta_F u (x,t)&= F(u((x,0),t),\dots,u((x,m-1),t))-u(x,t)\\
	&=F(e^{-\lambda t}f(x,0), \dots, e^{-\lambda t}f(x,m-1))- e^{-\lambda t}f(x)\\
	&= e^{-\lambda t}F(f(x,0),\dots, f(x,m-1))- e^{-\lambda t}f(x)\\
	&= e^{-\lambda t}F((1-\lambda)^{l(x)+1}, \dots, (1-\lambda)^{l(x)+1}) - e^{-\lambda t}(1-\lambda)^{l(x)}\\
	&= e^{-\lambda t} (1-\lambda)^{l(x)+1}- e^{-\lambda t} (1-\lambda)^{l(x)}\\
	&= e^{-\lambda t} (1-\lambda)^{l(x)}(1-\lambda+1)\\
	&= -\lambda e^{-\lambda t}(1-\lambda)^{l(x)}\\
	&= u_t(x,t),
\end{align*}
for all $(x,t)\in\T\times (0, +\infty).$
\end{proof}

We observe that for this particular solution we have
\[
	\max_{x\in\T} u(x,t)= e^{-\lambda t} \max_{x\in\T}f(x)= e^{-\lambda t}= u(\emptyset, t).
\]
Therefore, using the comparison principle stated in Theorem \ref{TCOMP}, we obtain
Theorem \ref{RiBer.es.la.B} as an immediate consequence.
Proposition \ref{RiBer.es.el.mejor.del.mundo} shows that the bound is optimal.

\medskip

Finally, let us prove that there are solutions with any prescribed behaviour of $u(\emptyset, t)$.

\begin{proof}[Proof of Theorem \ref{todo.vale}]
We just consider $u(x,t) \equiv a_{l(x)} (t)$ (that is, we take $u$ to be constant at every level). Then the equation
reduces to find $a_1, a_2,\dots, a_n,\dots$  such that
\[
a_i^\prime (t) = a_{i+1} (t) - a_i(t),
\]
that is,
\[
a_{i+1}(t) = a_{i}^\prime (t) + a_i(t).
\]
Hence, given $a_0$, we can construct
\begin{align*}
a_1 (t) &= a_0^\prime(t) + a_0(t),\\
a_2(t) &= a_0^{\prime\prime}(t) + 2 a_0^\prime (t) + a_0(t),
\end{align*}
etc, that is, at level $n$, we have
\[
a_n (t) = \sum_{j=0}^n \binom{n}{j} a_0^{(j)} (t).
\]
Therefore
\[
u(x,t)= \sum_{j=0}^{l(x)}  \binom{l(x)}{j} a_0^{(j)} (t)
\]
is a solution of the equation.
\end{proof}

Remark that depending on the behaviour of the derivatives of $a_0$ it may hold that
$$u(\emptyset,t)=a_0(t) =	\max_{x\in\T} u(x,t). $$

If we have
\[
a_0 (t) = (1+t)^{-\alpha}\quad (\alpha>0)
\]
then we get
\[
u(x,t) = \sum_{j=0}^{l(x)} \binom{l(x)}{j}
a_0^{(j)} (t) = \sum_{j=0}^{l(x)}  \binom{l(x)}{j} (-1)^j \left(\prod_{i=0}^{j-1} (\alpha +i)\right) (1+t)^{-\alpha-j}.
\]
Note that we have as initial condition for this particular solution
\[
f(x)  = \sum_{j=0}^{l(x)}  \binom{l(x)}{j} (-1)^j \left(\prod_{i=0}^{j-1} (\alpha +i)\right) .
\]

Note that this initial condition can be unbounded. For example, for $\alpha =1$ we have
\[
a_0 (t) = (1+t)^{-1}.
\]
Then we get
\begin{align*}
u(x,t)&= \sum_{j=0}^{l(x)}  \binom{l(x)}{j}
a_0^{(j)} (t)\\
&= \sum_{j=0}^{l(x)}  (-1)^j \binom{l(x)}{j}j!(1+t)^{-(j+1)}\\
&=\sum_{j=0}^{l(x)}  (-1)^j \dfrac{l(x)!}{(l(x)-j)!}(1+t)^{-(j+1)}
\end{align*}
is a solution of \eqref{CP} with initial condition
\[
f(x) = \sum_{j=0}^{l(x)}   (-1)^j \dfrac{l(x)!}{(l(x)-j)!}\,
=(-1)^{l(x)}!l(x),
\]
where $!n$ denotes the subfactorial of $n.$

We can observe that $f(x)$ is an oscillating function with
$$|f(x)|\to+\infty\qquad \mbox{ as }\qquad  l(x)\to+\infty.$$


\end{document}